\def\Corresponding author{$^{*}$\protect\footnotetext{$^{*}$ \lowercase{Corresponding author.}}}
\newtheorem{theorem}{Theorem}[section]
\newtheorem{lemma}[theorem]{Lemma}
\newtheorem{proposition}[theorem]{Proposition}
\theoremstyle{definition}
\newtheorem{definition}[theorem]{Definition}
\numberwithin{equation}{section}
\begin{document}
\title[On equality of inner and absolute central automorphisms]
{On equality of inner and absolute central automorphisms}
\author[ Kaboutari. Nasrabadi ]{Z. Kaboutari Farimani\\M. M. Nasrabadi \Corresponding author\\}%
\address{Department of Mathematics, University of Birjand, Birjand, Iran.}%
\email{z$_ {_{-}}$kaboutari@birjand.ac.ir, kaboutarizf@gmail.com, mnasrabadi@birjand.ac.ir}%
\thanks{2010 Mathematics Subject Classification: 20D45; 20D15}
\keywords{absolute central automorphisms; inner automorphisms; finite $p$-groups}%
\thanks{}
\maketitle
\begin{abstract} Let $G$ be a finite $p$-group and let $\mathrm{Aut}_l(G)$ be the group of absolute central automorphisms of $G$. We give necessary and sufficient conditions on $G$ such that $\mathrm{Aut}_l(G)=\mathrm{Inn}(G)$.
\normalsize \noindent
\end{abstract}
\section{\bf Introduction } An automorphism $\alpha$ of $G$ is called
central if $x^{-1}\alpha(x)\in Z(G)$ for each $x\in
G$. The central automorphisms of $G$, denoted by
$\mathrm{Aut}_c(G)$, fix $G^\prime$ elementwise and form a normal
subgroup of the full automorphism group of $G$. The properties of $\mathrm{Aut}_c(G)$ have been well studied. See \cite{Adney,Cur,JAM} for example. 

Hegarty in  \cite{Heg} generalized the concept of centre into absolute centre. The \textit{absolute centre} of a group $G$, denoted by $L(G)$, is the subgroup consisting of all those elements that are fixed under all automorphisms of $G$.
Also he introduced the absolute central automorphisms. An automorphism $\beta$ of $G$ is called an
absolute central automorphism if $x^{-1}\beta(x)\in L(G)$ for each $x\in
G$. We denote the set of all absolute central automorphisms of $G$ by $\mathrm{Aut}_l(G)$. Notice that  $\mathrm{Aut}_l(G)$ is a normal subgroup of $\mathrm{Aut}(G)$ contained in $\mathrm{Aut}_c(G)$.

In \cite{Cur} authors gave necessary and sufficient conditions on a $p$-group $G$ such that $\mathrm{Aut}_c(G)=\mathrm{Inn}(G)$. In \cite{Nasr} we gave conditions on a finite autonilpotent $p$-group $G$ of class $2$ such that $\mathrm{Aut}_l(G)=\mathrm{Inn}(G)$. In this paper we intend to give necessary and sufficient conditions on a non-abelian $p$-group $G$ in which $\mathrm{Aut}_l(G)$ and $\mathrm{Inn}(G)$ coincide.

Throughout this paper all groups are assumed to be finite and $p$ denotes a prime number.  Also if $G$ is a group, then $\mathrm{exp}(G)$, $\mathrm{Hom}(G,H)$ and $G^{p^n}$ stand for the exponent, the group of homomorphisms of $G$ into an abelian group $H$ and the subgroup generated by all $p^nth$ powers of elements of $G$, respectively. 
 Let $M$ and $N$ be two normal subgroups of $G$. We denote the subgroup of $\mathrm{Aut}(G)$ consisting of all automorpisms centralizing $G/M$ by
 $\mathrm{Aut}^M(G)$ and the subgroup of $\mathrm{Aut}(G)$ consisting of all automorphisms which act trivially on $N$ by $\mathrm{Aut}_N(G)$. Also, we consider
 $\mathrm{Aut}_{N}^M(G)=\mathrm{Aut}^M(G)\cap\mbox{Aut}_N(G)$.
\section{Preliminary results }
In this section, we give some results that will be used in the proof of the main results.
\begin{lemma}\cite[Lemma 2.3]{Nasr}
Suppose $H$ is an abelian $p$-group of exponent $p^c$, and $K$ is a cyclic group of order divisible by $p^c$. Then $\mathrm{Hom}(H,K)$ is isomorphic to $H$.
\end{lemma}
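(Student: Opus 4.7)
The plan is to prove the lemma by decomposing $H$ into cyclic $p$-groups and computing $\mathrm{Hom}(-,K)$ summand by summand. Since $H$ is a finite abelian $p$-group of exponent $p^c$, write
\[
H \;\cong\; \bigoplus_{i=1}^{n} \mathbb{Z}/p^{a_i}\mathbb{Z},
\qquad 1 \le a_i \le c,
\]
where at least one $a_i$ equals $c$. Using the standard fact that $\mathrm{Hom}(-,K)$ turns finite direct sums in the first argument into direct sums, we reduce to understanding $\mathrm{Hom}(\mathbb{Z}/p^{a_i}\mathbb{Z},K)$ for each $i$.

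Next, I would identify $\mathrm{Hom}(\mathbb{Z}/p^{a}\mathbb{Z},K)$ with the $p^{a}$-torsion subgroup $K[p^{a}] = \{k \in K : p^{a}k = 0\}$, via the map sending a homomorphism $\varphi$ to $\varphi(1)$. This is well defined because a cyclic group of order $p^{a}$ maps into $K$ precisely by choosing an image of the generator whose order divides $p^{a}$. Since $K$ is cyclic and $p^c$ divides $|K|$, in particular $p^{a}$ divides $|K|$ (because $a \le c$), so the subgroup $K[p^{a}]$ is cyclic of order exactly $p^{a}$; that is, $K[p^{a}] \cong \mathbb{Z}/p^{a}\mathbb{Z}$.

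Assembling these isomorphisms,
\[
\mathrm{Hom}(H,K) \;\cong\; \bigoplus_{i=1}^{n} \mathrm{Hom}(\mathbb{Z}/p^{a_i}\mathbb{Z},K)
\;\cong\; \bigoplus_{i=1}^{n} \mathbb{Z}/p^{a_i}\mathbb{Z}
\;\cong\; H,
\]
which gives the desired conclusion. There is essentially no deep obstacle here; the only point that must be used carefully is the hypothesis that $p^c$ divides $|K|$, which is exactly what guarantees that each cyclic summand $\mathbb{Z}/p^{a_i}\mathbb{Z}$ of $H$ embeds into $K$ and realizes its full order in the torsion subgroup. Without this hypothesis, $\mathrm{Hom}(\mathbb{Z}/p^{a_i}\mathbb{Z},K)$ could be strictly smaller than $\mathbb{Z}/p^{a_i}\mathbb{Z}$ and the final isomorphism would fail.
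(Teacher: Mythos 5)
Your proof is correct: decomposing $H$ into cyclic summands, identifying $\mathrm{Hom}(\mathbb{Z}/p^{a}\mathbb{Z},K)$ with the $p^{a}$-torsion subgroup of $K$, and using that a cyclic group of order divisible by $p^{a}$ has a unique (cyclic) subgroup of order $p^{a}$ is exactly the standard argument. The paper itself gives no proof of this lemma --- it simply cites \cite[Lemma 2.3]{Nasr} --- so there is nothing to contrast your approach with; your write-up supplies the expected details.
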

Let $G$ be a group. Then the \textit{autocommutator} of an element $g\in G$ and automorphism $\alpha\in \mathrm{Aut}(G)$ is defined as $[g,\alpha]=g^{-1}g^{\alpha}$.
\begin{definition}
The \textit{absolute centre} of a group $G$, denoted by $L(G)$, is defined as 
$$L(G)=\{g\in G~|~[g,\alpha]=1,~ \forall \alpha\in \mathrm{Aut}(G)\}$$

Clearly,  $L(G)$ is a central characteristic subgroup of $G$.
\end{definition}
\begin{definition}
An automorphism $\alpha$ of $G$ is called \textit{absolute central}, if $g^{-1}\alpha(g)\in L(G)$ for each $g\in G$. The set of all absolute central automorphisms of $G$ is denoted by $\mathrm{Aut}_l(G)$.

 Clearly, $\mathrm{Aut}_l(G)$ is an abelian normal subgroup of $\mathrm{Aut}(G)$.
\end{definition}
\begin{proposition}
\cite[Proposition 1]{Safa} Let $G$ be a group. Then \[\mathrm{Aut}_l(G)\cong \mathrm{Hom}(G/L(G),L(G)).\]
\end{proposition}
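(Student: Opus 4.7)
The plan is to construct an explicit isomorphism $\Phi\colon \mathrm{Aut}_l(G)\to\mathrm{Hom}(G/L(G),L(G))$ and exhibit a two-sided inverse by lifting homomorphisms back to automorphisms. The natural candidate is to send an absolute central automorphism $\alpha$ to the autocommutator map $g\mapsto [g,\alpha]=g^{-1}\alpha(g)$.

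First I would verify that this assignment really lands in $\mathrm{Hom}(G/L(G),L(G))$. By definition of $\mathrm{Aut}_l(G)$, the values $g^{-1}\alpha(g)$ lie in $L(G)$. Since $L(G)\leq Z(G)$, for any $g,h\in G$ one has
\begin{equation*}
(gh)^{-1}\alpha(gh)=h^{-1}\bigl(g^{-1}\alpha(g)\bigr)\alpha(h)=\bigl(g^{-1}\alpha(g)\bigr)\bigl(h^{-1}\alpha(h)\bigr),
\end{equation*}
so the map is a homomorphism $G\to L(G)$. Because every automorphism fixes $L(G)$ elementwise, elements of $L(G)$ lie in its kernel, and it factors through $G/L(G)$; write the induced homomorphism as $\bar f_\alpha$, and set $\Phi(\alpha)=\bar f_\alpha$.

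Next I would check that $\Phi$ is a group homomorphism. For $\alpha,\beta\in\mathrm{Aut}_l(G)$, using that $\alpha$ fixes $g^{-1}\beta(g)\in L(G)$,
\begin{equation*}
g^{-1}(\alpha\beta)(g)=g^{-1}\alpha(g)\cdot\alpha\bigl(g^{-1}\beta(g)\bigr)=\bigl(g^{-1}\alpha(g)\bigr)\bigl(g^{-1}\beta(g)\bigr),
\end{equation*}
so $\bar f_{\alpha\beta}=\bar f_\alpha\bar f_\beta$. To show $\Phi$ is bijective I would construct an inverse: given $\phi\in\mathrm{Hom}(G/L(G),L(G))$, define $\alpha_\phi(g)=g\,\phi(gL(G))$. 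That $\alpha_\phi$ is an endomorphism follows once more from $L(G)\leq Z(G)$. If $\alpha_\phi(g)=1$, then $g=\phi(gL(G))^{-1}\in L(G)$, whence $\phi(gL(G))=1$ and $g=1$; thus $\alpha_\phi$ is injective, hence an automorphism of the finite group $G$, and it visibly lies in $\mathrm{Aut}_l(G)$.

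Finally I would verify that $\Phi$ and $\phi\mapsto\alpha_\phi$ are mutually inverse: $\alpha_{\Phi(\alpha)}(g)=g\cdot g^{-1}\alpha(g)=\alpha(g)$, and $\Phi(\alpha_\phi)(gL(G))=g^{-1}\bigl(g\phi(gL(G))\bigr)=\phi(gL(G))$. No step is a genuine obstacle here; the only thing to keep an eye on is the repeated use of $L(G)\leq Z(G)$, which is precisely what makes autocommutators multiplicative and what allows $\alpha_\phi$ to be a homomorphism.
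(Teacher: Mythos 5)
Your proof is correct. The paper gives no proof of this proposition---it is cited from Moghaddam and Safa---but your explicit construction of the autocommutator map $\alpha\mapsto(gL(G)\mapsto g^{-1}\alpha(g))$ together with the inverse lifting $\phi\mapsto\alpha_\phi$ is the standard argument, and it is essentially the same computation the paper sketches for the analogous Lemma 2.5 concerning $\mathrm{Aut}_{Z(G)}^{L(G)}(G)$.
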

\begin{lemma}
Let $G$ be a group. Then \[\mathrm{Aut}_{Z(G)}^{L(G)}(G)\cong\mathrm{Hom}(G/Z(G),L(G)).\]
\end{lemma}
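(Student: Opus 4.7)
The plan is to mimic the standard proof of the isomorphism $\mathrm{Aut}_c(G)\cong \mathrm{Hom}(G/Z(G),Z(G))$, with $L(G)$ playing the role of $Z(G)$ in the codomain. Define a map
\[
\Phi:\mathrm{Aut}_{Z(G)}^{L(G)}(G)\To \mathrm{Hom}(G/Z(G),L(G)),\qquad \Phi(\alpha)(xZ(G))=x^{-1}\alpha(x).
\]
Since $\alpha\in \mathrm{Aut}^{L(G)}(G)$, the element $x^{-1}\alpha(x)$ lies in $L(G)$, so the codomain is correct.

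First I would verify that $\Phi(\alpha)$ is well-defined on cosets modulo $Z(G)$: if $y=xz$ with $z\in Z(G)$, then $\alpha(z)=z$ (because $\alpha\in \mathrm{Aut}_{Z(G)}(G)$), and using that $x^{-1}\alpha(x)\in L(G)\subseteq Z(G)$ commutes with $z$, one gets $y^{-1}\alpha(y)=x^{-1}\alpha(x)$. Next, that $\Phi(\alpha)$ is a homomorphism follows from expanding $(xy)^{-1}\alpha(xy)=y^{-1}(x^{-1}\alpha(x))\alpha(y)$ and pulling the central element $x^{-1}\alpha(x)$ past $y^{-1}\alpha(y)$; abelianness of $L(G)$ does the rest.

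The most delicate step, and the one I would flag as the main obstacle, is showing that $\Phi$ itself is a group homomorphism. For $\alpha,\beta\in \mathrm{Aut}_{Z(G)}^{L(G)}(G)$ one computes
\[
x^{-1}\alpha\beta(x)\;=\;\bigl(x^{-1}\alpha(x)\bigr)\cdot\alpha\!\bigl(x^{-1}\beta(x)\bigr).
\]
The key observation is that $x^{-1}\beta(x)\in L(G)$, and $L(G)$ is fixed \emph{elementwise} by every automorphism of $G$; hence $\alpha(x^{-1}\beta(x))=x^{-1}\beta(x)$, and $\Phi(\alpha\beta)=\Phi(\alpha)\Phi(\beta)$. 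This is the one place where the definition of $L(G)$ (not just its being characteristic) is really used, and it is precisely what lets us avoid any twisting by $\alpha$ in the codomain.

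Injectivity of $\Phi$ is immediate: $\Phi(\alpha)=1$ forces $\alpha(x)=x$ for every $x\in G$. For surjectivity, given $f\in \mathrm{Hom}(G/Z(G),L(G))$ I would define $\alpha_f(x)=x\,f(xZ(G))$ and check the routine points: $\alpha_f$ is a homomorphism (using $f(xZ(G))\in Z(G)$ to move it past $y$), it is injective (if $\alpha_f(x)=1$ then $x\in L(G)\subseteq Z(G)$, so $f(xZ(G))=1$ and $x=1$) and therefore bijective since $G$ is finite, it fixes $Z(G)$ pointwise, and $x^{-1}\alpha_f(x)=f(xZ(G))\in L(G)$. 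Then $\Phi(\alpha_f)=f$, completing the isomorphism.
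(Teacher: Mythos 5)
Your proposal is correct and follows exactly the paper's approach: the paper defines the same map $\alpha\mapsto\bigl(gZ(G)\mapsto g^{-1}\alpha(g)\bigr)$ and simply asserts that "a simple verification" shows it is an isomorphism, whereas you carry out that verification in full (well-definedness, the two homomorphism properties, injectivity, and surjectivity via $\alpha_f(x)=x\,f(xZ(G))$). Your observation that $\alpha$ fixes $x^{-1}\beta(x)$ because $L(G)$ is fixed elementwise by every automorphism is precisely the right justification for $\Phi$ being a homomorphism, and the finiteness you invoke for bijectivity of $\alpha_f$ is covered by the paper's standing assumption that all groups are finite.
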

\begin{proof} 
Consider the map $\bar{\alpha}:G/Z(G)\longrightarrow L(G)$ defined by $\bar{\alpha}(gZ(G))=g^{-1}\alpha(g)$ for all $g\in G$ and each $\alpha\in\mbox{Aut}_{Z(G)}^{L(G)}(G)$. Clearly, $\bar{\alpha}$ is a well-defined homomorphism of $G/Z(G)$ into $L(G)$. Now, a simple verification shows that the map $\phi:\mathrm{Aut}_{Z(G)}^{L(G)}(G)\longrightarrow \mathrm{Hom}(G/Z(G),L(G))$ defined by $\phi(\alpha)=\bar{\alpha}$, for any $\alpha\in\mbox{Aut}_{Z(G)}^{L(G)}(G)$, is an isomorphism.
\end{proof}
\begin{lemma}
Let $G$ be a  $p$-group of class $2$ such that $\mathrm{exp}(G/Z(G))\leq\mathrm{exp}(L(G))$. Then $$|\mathrm{Hom}(G/Z(G),L(G))|\geq |G/Z(G)|p^{r(s-1)}$$
where $r=\mathrm{rank}(G/Z(G))$ and $s=\mathrm{rank}(L(G))$.
\end{lemma}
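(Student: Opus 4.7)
The plan is to reduce the claim to a standard computation of the order of $\mathrm{Hom}$ between two finite abelian $p$-groups, and then exploit the exponent hypothesis to extract a clean lower bound.

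First I would note that since $G$ has nilpotency class $2$, the quotient $G/Z(G)$ is abelian, and $L(G) \leq Z(G)$ is abelian as well. Hence both are finite abelian $p$-groups and I may choose invariant factor decompositions
\[
G/Z(G) \cong \mathbb{Z}_{p^{c_1}} \times \cdots \times \mathbb{Z}_{p^{c_r}}, \qquad L(G) \cong \mathbb{Z}_{p^{d_1}} \times \cdots \times \mathbb{Z}_{p^{d_s}},
\]
with $c_1 \geq c_2 \geq \cdots \geq c_r \geq 1$ and $d_1 \geq d_2 \geq \cdots \geq d_s \geq 1$. Here $p^{c_1}=\exp(G/Z(G))$ and $p^{d_1}=\exp(L(G))$, so the hypothesis reads $c_1 \leq d_1$, and the ranks are exactly $r$ and $s$.

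Next I would invoke the standard identity $|\mathrm{Hom}(\mathbb{Z}_{p^a},\mathbb{Z}_{p^b})|=p^{\min(a,b)}$ together with the bilinearity of $\mathrm{Hom}$ in direct sums to obtain
\[
|\mathrm{Hom}(G/Z(G),L(G))| \;=\; \prod_{i=1}^{r}\prod_{j=1}^{s} p^{\min(c_i,d_j)} \;=\; p^{\sum_{i,j}\min(c_i,d_j)}.
\]
For each fixed $i$, I would split off the $j=1$ term: since $c_i \leq c_1 \leq d_1$, we have $\min(c_i,d_1)=c_i$, while for $j\geq 2$ the trivial bound $\min(c_i,d_j)\geq 1$ (valid because $c_i\geq 1$ and $d_j\geq 1$) gives
\[
\sum_{j=1}^{s}\min(c_i,d_j) \;\geq\; c_i + (s-1).
\]
Summing over $i$ yields $\sum_{i,j}\min(c_i,d_j) \geq \sum_i c_i + r(s-1) = \log_p|G/Z(G)| + r(s-1)$, which is exactly the desired inequality after exponentiating.

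The argument is largely mechanical, so the only real subtlety is making sure the exponent hypothesis $c_1 \leq d_1$ is used precisely where it is needed, namely to force $\min(c_i,d_1)=c_i$ for \emph{every} $i$, which is what produces the factor $|G/Z(G)|$ in the final bound. Without this hypothesis one would only recover $\sum_i \min(c_i,d_1)$, which could be strictly smaller than $\sum_i c_i$.
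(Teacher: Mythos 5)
Your proof is correct and takes essentially the same route as the paper: decompose $G/Z(G)$ and $L(G)$ into cyclic factors and count homomorphisms via $|\mathrm{Hom}(C_{p^{a}},C_{p^{b}})|=p^{\min(a,b)}$, using the exponent hypothesis to guarantee $\min(c_i,d_1)=c_i$ for every $i$. The paper sets up the same decompositions and then omits the computation by citing \cite[Lemma 2.8]{Nasr}; your argument simply supplies those details explicitly.
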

\begin{proof}
Let $\mathrm{exp}(L(G))=p^n$ and $\mathrm{exp}(G/Z(G))=p^c$. Clearly, $r\geq 2$. Now let
$$L(G)=C_{p^{n}}\times C_{p^{\gamma_{2}}}\times  \dots \times C_{p^{\gamma_{s}}} $$ and
$$G/Z(G)=C_{p^c}\times C_{p^{\beta_{2}}}\times \dots \times C_{p^{\beta_{r}}}$$
where $n\geq \gamma_{2}\geq \dots \geq \gamma_{s}> 0$ and $c\geq \beta_{2}\geq \dots \geq \beta_{r}> 0$.
Since $G$ is a $p$-group and $\mathrm{exp}(G/Z(G))\leq\mbox{exp}(L(G))$,
 $\mathrm{exp}(G/Z(G))$ divides $\mathrm{exp}(L(G))$. The rest of proof is similar to that of \cite[Lemma 2.8]{Nasr} so we omit the details.
\end{proof}
Now we establish a lower bound for $|\mathrm{Aut}_l(G)|$.
\begin{lemma}
Let $G$ be a  $p$-group of class $2$ such that $\mathrm{exp}(G/Z(G))\leq\mathrm{exp}(L(G))$. Then $|\mathrm{Aut}_l(G)|\geq |G/Z(G)| p^{r(s-1)}$,
where $r$ and $s$ are as defined before.
\end{lemma}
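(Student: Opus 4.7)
The plan is to combine Proposition 2.4 with the previous lemma, bridging them via the inclusion $L(G)\subseteq Z(G)$. By Proposition 2.4, we have the identification $|\mathrm{Aut}_l(G)|=|\mathrm{Hom}(G/L(G),L(G))|$, so the task reduces to bounding $|\mathrm{Hom}(G/L(G),L(G))|$ from below by $|G/Z(G)|\,p^{r(s-1)}$. The preceding lemma already gives this bound for $|\mathrm{Hom}(G/Z(G),L(G))|$, so the only work is to pass from homomorphisms out of $G/Z(G)$ to homomorphisms out of the (potentially larger) group $G/L(G)$.

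The key observation is that $L(G)$ is a central characteristic subgroup of $G$ (noted right after the definition of $L(G)$), hence $L(G)\subseteq Z(G)$. This gives a canonical surjective homomorphism $\pi\colon G/L(G)\twoheadrightarrow G/Z(G)$. Pulling back along $\pi$ yields an injective homomorphism
\[
\pi^{\ast}\colon \mathrm{Hom}(G/Z(G),L(G))\longrightarrow \mathrm{Hom}(G/L(G),L(G)),\qquad f\longmapsto f\circ\pi,
\]
because $\pi$ is surjective. Consequently,
\[
|\mathrm{Hom}(G/L(G),L(G))|\;\geq\;|\mathrm{Hom}(G/Z(G),L(G))|.
\]

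Chaining the three facts together, I obtain
\[
|\mathrm{Aut}_l(G)|=|\mathrm{Hom}(G/L(G),L(G))|\geq |\mathrm{Hom}(G/Z(G),L(G))|\geq |G/Z(G)|\,p^{r(s-1)},
\]
where the last inequality is exactly the preceding lemma, whose hypothesis $\exp(G/Z(G))\leq\exp(L(G))$ we have inherited. There is essentially no obstacle here: the one point meriting care is confirming $L(G)\subseteq Z(G)$ so that the projection $\pi$ is defined, and noting that surjectivity of $\pi$ (not merely existence) is what makes $\pi^{\ast}$ injective. Everything else is a direct invocation of already-established results.
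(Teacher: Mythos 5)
Your proof is correct and follows essentially the same route as the paper: both reduce to the chain $|\mathrm{Aut}_l(G)|=|\mathrm{Hom}(G/L(G),L(G))|\geq|\mathrm{Hom}(G/Z(G),L(G))|\geq|G/Z(G)|p^{r(s-1)}$, invoking Proposition 2.4 and Lemma 2.6. The only cosmetic difference is that you justify the middle inequality by pulling back along the surjection $G/L(G)\twoheadrightarrow G/Z(G)$, whereas the paper gets it from the containment $\mathrm{Aut}_{Z(G)}^{L(G)}(G)\leq\mathrm{Aut}_l(G)$ together with Lemma 2.5.
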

\begin{proof}
We have
$$|\mathrm{Aut}_{Z(G)}^{L(G)}(G)|=|\mathrm{Hom}(G/Z(G),L(G))|\leq |\mathrm{Hom}(G/L(G),L(G))|=|\mathrm{Aut}_l(G)|.$$
So by Lemma $2.6$, $|\mathrm{Aut}_l(G)|\geq|G/Z(G)|p^{r(s-1)}$. 
\end{proof}
\begin{lemma}\cite[Lemma 2.7]{Nasr}
Let $G$ be a group. Then $G/L(G)$ is abelian if and only if $\mathrm{Inn}(G)\leq \mathrm{Aut}_l(G)$.
\end{lemma}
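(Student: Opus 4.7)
My plan is to reduce the stated equivalence to the single identity
\[ x^{-1}\iota_g(x) = [x,g], \]
where $\iota_g$ denotes the inner automorphism $x \mapsto g^{-1}xg$. This identity translates membership of $\iota_g$ in $\mathrm{Aut}_l(G)$ into the requirement that every commutator $[x,g]$ lie in $L(G)$, so that the containment $\mathrm{Inn}(G) \leq \mathrm{Aut}_l(G)$ becomes equivalent to $G' \leq L(G)$, which in turn is precisely the statement that $G/L(G)$ is abelian.

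For the \emph{if} direction I assume $G/L(G)$ is abelian, hence $G' \leq L(G)$. Then for every $g \in G$ and every $x \in G$, the element $x^{-1}\iota_g(x) = [x,g]$ sits inside $G' \leq L(G)$, so $\iota_g \in \mathrm{Aut}_l(G)$. Since the $\iota_g$ generate $\mathrm{Inn}(G)$ and $\mathrm{Aut}_l(G)$ is a subgroup of $\mathrm{Aut}(G)$, the inclusion $\mathrm{Inn}(G) \leq \mathrm{Aut}_l(G)$ follows.

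For the \emph{only if} direction I start from $\mathrm{Inn}(G) \leq \mathrm{Aut}_l(G)$, so each $\iota_g$ is absolute central. Applying the defining condition of $\mathrm{Aut}_l(G)$ to $\iota_g$ forces $[x,g] \in L(G)$ for all $x, g \in G$. Since such commutators generate $G'$ and $L(G)$ is a (central, characteristic, in particular normal) subgroup of $G$ by Definition 2.2, I conclude $G' \leq L(G)$, i.e., $G/L(G)$ is abelian.

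I do not anticipate any genuine obstacle: the whole proof comes down to the one-line commutator identity together with the standard fact that $G'$ is generated by commutators, and the normality of $L(G)$ needed to make sense of the quotient $G/L(G)$ has already been recorded.
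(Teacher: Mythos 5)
Your proof is correct: the identity $x^{-1}\iota_g(x)=[x,g]$ immediately translates $\mathrm{Inn}(G)\leq \mathrm{Aut}_l(G)$ into $G'\leq L(G)$, which is exactly the condition that $G/L(G)$ be abelian. The paper gives no proof here, only a citation to \cite[Lemma 2.7]{Nasr}, and your argument is the standard one that the cited lemma rests on, so there is nothing further to compare.
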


\section{ Main results}
In this section, we obtain some properties of the group $G$ when $\mathrm{Aut}_l(G)=\mathrm{Inn}(G)$, and then give necessary and sufficient conditions under which $\mathrm{Aut}_l(G)=\mathrm{Inn}(G)$.
\begin{lemma}
Let $G$ be a non-abelian $p$-group. If $\mathrm{Aut}_l(G)=\mathrm{Inn}(G)$, then $L(G)$ is cyclic.
\end{lemma}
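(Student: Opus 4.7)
The plan is to derive the cyclicity of $L(G)$ by comparing $|\mathrm{Inn}(G)|$ against the lower bound for $|\mathrm{Aut}_l(G)|$ supplied by Lemma 2.7, and observing that the hypothesis forces the rank of $L(G)$ to collapse to $1$.

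First, I would exploit the inclusion $\mathrm{Inn}(G) \leq \mathrm{Aut}_l(G)$ that follows from the hypothesis. By Lemma 2.8 this inclusion is equivalent to $G/L(G)$ being abelian; consequently $G' \leq L(G) \leq Z(G)$, and since $G$ is non-abelian it must have nilpotency class exactly $2$.

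Next I would verify the exponent hypothesis required by Lemma 2.7. In any class-$2$ group the commutator identity $[g^m,h] = [g,h]^m$ yields $\mathrm{exp}(G/Z(G)) = \mathrm{exp}(G')$; combining this with $G' \leq L(G)$ gives $\mathrm{exp}(G/Z(G)) \leq \mathrm{exp}(L(G))$, which is exactly the hypothesis of Lemma 2.7.

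Finally, Lemma 2.7 produces $|\mathrm{Aut}_l(G)| \geq |G/Z(G)|\, p^{r(s-1)}$, with $r = \mathrm{rank}(G/Z(G))$ and $s = \mathrm{rank}(L(G))$, while the hypothesis $\mathrm{Aut}_l(G) = \mathrm{Inn}(G)$ forces $|\mathrm{Aut}_l(G)| = |G/Z(G)|$. Hence $p^{r(s-1)} \leq 1$, so $r(s-1) = 0$. Because $G$ is non-abelian, the classical fact that $G/Z(G)$ cannot be non-trivially cyclic yields $r \geq 2$; therefore $s = 1$, i.e.\ $L(G)$ is cyclic. The only technical step that requires care is establishing the exponent inequality needed to invoke Lemma 2.7; everything else is a direct order comparison.
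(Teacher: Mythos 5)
Your proof is correct and follows essentially the same route as the paper: invoke the lower bound $|\mathrm{Aut}_l(G)|\geq|G/Z(G)|\,p^{r(s-1)}$ of Lemma 2.7, compare with $|\mathrm{Inn}(G)|=|G/Z(G)|$, and use $r\geq 2$ to force $s=1$. The only (harmless) difference is how the exponent hypothesis of Lemma 2.7 is verified: you derive $\mathrm{exp}(G/Z(G))\leq\mathrm{exp}(L(G))$ from $G'\leq L(G)$ (via Lemma 2.8) together with the class-$2$ identity $[g^m,h]=[g,h]^m$, whereas the paper reads it off directly from the isomorphism $G/Z(G)\cong\mathrm{Hom}(G/L(G),L(G))$, whose right-hand side has exponent dividing $\mathrm{exp}(L(G))$.
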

\begin{proof}
Let $\mathrm{Aut}_l(G)=\mathrm{Inn}(G)$. Thus $\mathrm{Hom}(G/L(G),L(G))\cong G/Z(G)$. Hence $\mathrm{exp}(G/Z(G))\leq\mathrm{exp}(L(G))$.
Also, $\mathrm{Inn}(G)$ is abelian so that $G$ is nilpotent of class $2$. Now, by Lemma $2.7$,  we have $p^{r(s-1)}\leq \frac{|\mathrm{Aut}_l(G)|}{|\mathrm{Inn}(G)|}$, where
 $r=\mathrm{rank}(G/Z(G))$ and $s=\mathrm{rank}(L(G))$. Thus $p^{r(s-1)}\leq 1$. Since $r\geq 2$ we must have $s=1$, that is, $L(G)$ is cyclic.
\end{proof}
\begin{proposition}
Let $G$ be a non-abelian $p$-group ($p$ odd) such that $\mathrm{Aut}_l(G)=\mathrm{Inn}(G)$. Then $\exp(G/Z(G))=\exp(L(G))$.
\end{proposition}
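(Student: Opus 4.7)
The plan is a proof by contradiction. Suppose $\exp(G/Z(G)) = p^c < p^n = \exp(L(G))$. From (the proof of) Lemma~3.1, $G$ has nilpotency class~$2$, $L(G)$ is cyclic of order $p^n$, $c \leq n$, and $\mathrm{Inn}(G) \leq \mathrm{Aut}_l(G)$ forces $G' \leq L(G)$ and $\exp(G') \leq p^c$. I will construct $\rho \in \mathrm{Aut}_l(G) \setminus \mathrm{Inn}(G)$, yielding the desired contradiction.

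The first step is to establish the reduction $Z(G) = L(G)$. Proposition~2.4 combined with the hypothesis gives $\mathrm{Hom}(G/L(G), L(G)) \cong \mathrm{Aut}_l(G) \cong \mathrm{Inn}(G) \cong G/Z(G)$. Writing $G/L(G) \cong \prod_i C_{p^{d_i}}$ and $L(G) \cong C_{p^n}$, the exponent of the Hom-group equals $p^{\min(\exp(G/L(G)),\, n)}$; equating with $p^c$ and using $\exp(G/L(G)) \geq \exp(G/Z(G)) = p^c$ (because $L(G) \leq Z(G)$) forces $\exp(G/L(G)) = p^c$, so every $d_i \leq c \leq n$. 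A direct count then gives $|\mathrm{Hom}(G/L(G), L(G))| = \prod_i p^{d_i} = |G/L(G)|$, and comparison with $|\mathrm{Inn}(G)| = |G/Z(G)|$ yields $|Z(G)| = |L(G)|$, i.e., $Z(G) = L(G) \cong C_{p^n}$.

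The second step is the $p^c$-power trick, which is where $p$ odd is essential. Define $\rho(g) = g^{1 + p^c}$. In a class-$2$ group one has the identity $(gh)^{p^c} = g^{p^c} h^{p^c} [h,g]^{\binom{p^c}{2}}$; since $p$ is odd, $\binom{p^c}{2} = p^c(p^c-1)/2$ is divisible by $p^c$, and $[h,g] \in G'$ has order dividing $p^c$, so the commutator factor is trivial. Combined with $g^{p^c} \in Z(G)$, this shows $\rho$ is a homomorphism; since $\gcd(1+p^c, p) = 1$, $\rho$ is injective and hence an automorphism. Because $g^{-1}\rho(g) = g^{p^c} \in Z(G) = L(G)$, we have $\rho \in \mathrm{Aut}_l(G)$. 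On the other hand every inner automorphism fixes $Z(G)$ pointwise, whereas $\rho$ sends a generator $z$ of $Z(G) = C_{p^n}$ to $z^{1+p^c} \neq z$ (as $p^c < p^n$); thus $\rho \notin \mathrm{Inn}(G)$, contradicting the hypothesis. The main obstacle is the reduction $Z(G) = L(G)$; without it, $g^{p^c}$ sits in $Z(G)$ but possibly not in $L(G)$, and the absolute centrality of $\rho$ would fail.
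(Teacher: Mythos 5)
Your proof is correct, and it shares the paper's central device --- the power map $x\mapsto x^{1+p^k}$, which is a homomorphism on a class-$2$ group when $p$ is odd and the $p^k$-th powers are central --- but it reaches the contradiction by a noticeably longer route. The paper simply takes $\theta(x)=x^{1+p^{n-1}}$ (legitimate since $\exp(G/Z(G))<p^n$ puts $x^{p^{n-1}}$ in $Z(G)$) and then invokes the \emph{definition} of $L(G)$: every automorphism of $G$ fixes $L(G)$ elementwise, so $g=\theta(g)=g^{1+p^{n-1}}$ for all $g\in L(G)$, forcing $\exp(L(G))\le p^{n-1}$, a contradiction. No comparison with $\mathrm{Inn}(G)$ and no membership in $\mathrm{Aut}_l(G)$ is needed, so the entire first half of your argument --- the reduction $Z(G)=L(G)$ via the exponent and order count of $\mathrm{Hom}(G/L(G),L(G))$ --- is superfluous for this proposition. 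That reduction is itself correct (and a mildly interesting by-product: under the contradiction hypothesis the centre would have to coincide with the absolute centre), and your verification that $\rho$ is an automorphism is exactly the computation the paper dismisses as ``clearly.'' But even within your own setup the conclusion can be short-circuited: once $\rho$ is known to be an automorphism that moves a generator $z$ of $L(G)$, you have already contradicted $z\in L(G)$, with no need to locate $\rho$ in $\mathrm{Aut}_l(G)\setminus\mathrm{Inn}(G)$. The ``main obstacle'' you identify (getting $g^{p^c}$ into $L(G)$ rather than merely into $Z(G)$) is therefore not an obstacle at all once one remembers that $L(G)$ is pointwise fixed by \emph{all} automorphisms, not just the absolute central ones.
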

\begin{proof}
Suppose $p$ is an odd prime number and $\mathrm{exp}(L(G))=p^n$. Consider the map $\theta:G\longrightarrow G$ defined by $\theta(x)=x^{1+p^{n-1}}$ for all $x\in G$.
Clearly, $\theta$ is an automorphism when  $\mathrm{exp}(G/Z(G))<\mathrm{exp}(L(G))$. In this case, $g^{p^{n-1}}=1$ for all $g$ in $L(G)$. Hence $\mathrm{exp}(L(G))<p^n$, which is a contradiction. Thus $\mathrm{exp}(G/Z(G))=\mathrm{exp}(L(G))$.
\end{proof}
\begin{theorem}
Let $G$ be a non-abelian $p$-group. Then $\mathrm{Inn}(G)=\mathrm{Aut}_{Z(G)}^{L(G)}(G)$ if and only if $G'\leq L(G)$ and $L(G)$ is cyclic.
\end{theorem}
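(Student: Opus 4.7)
The plan is to treat the two implications separately, using Lemma 2.5 to convert each into a counting statement about $\mathrm{Hom}(G/Z(G),L(G))$. Throughout, I will use the observation that for an inner automorphism $\alpha_{h}$, the displacement $g^{-1}\alpha_{h}(g)$ equals $[g,h]$, so that $\mathrm{Inn}(G)\leq \mathrm{Aut}_{Z(G)}^{L(G)}(G)$ holds if and only if $G'\leq L(G)$ (the containment $\mathrm{Inn}(G)\leq \mathrm{Aut}_{Z(G)}(G)$ being automatic).

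For the forward direction, assume $\mathrm{Inn}(G)=\mathrm{Aut}_{Z(G)}^{L(G)}(G)$. Then $G'\leq L(G)$ by the opening observation, and since the right side is isomorphic to the abelian group $\mathrm{Hom}(G/Z(G),L(G))$, the group $G/Z(G)$ must be abelian, so $G$ has class exactly $2$. Equality of orders forces the natural injection $G/Z(G)\hookrightarrow \mathrm{Hom}(G/Z(G),L(G))$ sending $hZ(G)$ to $gZ(G)\mapsto[g,h]$ to be an isomorphism; comparing exponents of the two sides then gives $\mathrm{exp}(G/Z(G))\leq \mathrm{exp}(L(G))$. With this exponent inequality secured, Lemma 2.6 applies and yields $|G/Z(G)|=|\mathrm{Hom}(G/Z(G),L(G))|\geq |G/Z(G)|\,p^{r(s-1)}$; since $r\geq 2$ for any non-abelian $G$ (because $G/Z(G)$ cyclic would force $G$ abelian), the factor $p^{r(s-1)}$ must equal $1$, so $s=1$ and $L(G)$ is cyclic.

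For the converse, suppose $G'\leq L(G)$ and $L(G)$ is cyclic. Then $G$ has class $2$ and $\mathrm{Inn}(G)\leq \mathrm{Aut}_{Z(G)}^{L(G)}(G)$, so the remaining task is to show the two groups have the same order; by Lemma 2.5 this amounts to $|\mathrm{Hom}(G/Z(G),L(G))|=|G/Z(G)|$, and by Lemma 2.1 this in turn reduces to the exponent inequality $\mathrm{exp}(G/Z(G))\leq \mathrm{exp}(L(G))$. This exponent bound is the main obstacle: it is not immediately visible from the hypothesis that $L(G)$ is cyclic. To extract it I would exploit the commutator pairing $G/Z(G)\times G/Z(G)\to G'$, which in a class-$2$ group is well defined and biadditive, and is non-degenerate because $[g,h]=1$ for all $h$ forces $g\in Z(G)$. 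Non-degeneracy provides an injection $G/Z(G)\hookrightarrow \mathrm{Hom}(G/Z(G),G')$, and since $G'\leq L(G)$ is cyclic, comparing exponents on the two sides yields $\mathrm{exp}(G/Z(G))\leq \mathrm{exp}(G')\leq \mathrm{exp}(L(G))$, which is the bound needed to complete the proof.
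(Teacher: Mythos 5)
Your proof is correct and follows essentially the same route as the paper: both directions reduce via Lemma 2.5 to counting $\mathrm{Hom}(G/Z(G),L(G))$, with Lemma 2.1 giving sufficiency and Lemma 2.6 giving the bound $p^{r(s-1)}=1$ for necessity. The only cosmetic differences are that you obtain the containment $\mathrm{Inn}(G)\leq \mathrm{Aut}_{Z(G)}^{L(G)}(G)$ directly from $g^{-1}\alpha_h(g)=[g,h]$ instead of routing through Lemma 2.8, and you spell out the class-$2$ identity $\mathrm{exp}(G/Z(G))\leq\mathrm{exp}(G')$ via the commutator pairing where the paper simply asserts $\mathrm{exp}(G')=\mathrm{exp}(G/Z(G))$.
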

\begin{proof}
Suppose $G'\leq L(G)$ and $L(G)$ is cyclic. Since $G'\leq L(G)$, $G$ is nilpotent of class $2$ and $\mathrm{exp}(G')=\mathrm{exp}(G/Z(G))$. Hence $\mathrm{exp}(G/Z(G))$ divides $\mathrm{exp}(L(G))$ and by Lemma $2.1$, $\mathrm{Hom}(G/Z(G),L(G))\cong G/Z(G)$. Therefore by Lemma $2.5$, $\mathrm{Aut}_{Z(G)}^{L(G)}(G)\cong\mbox{Inn}(G)$. On the other hand, by Lemma $2.8$, we have $\mathrm{Inn}(G)\leq \mathrm{Aut}_l(G)$. Now since $\mathrm{Inn}(G)$ fixes the centre element-wise, we conclude that
$\mathrm{Inn}(G)\leq \mathrm{Aut}_{Z(G)}^{L(G)}(G)$. Hence $\mathrm{Inn}(G)=\mathrm{Aut}_{Z(G)}^{L(G)}(G)$.

To prove the converse, assume that $\mathrm{Inn}(G)=\mathrm{Aut}_{Z(G)}^{L(G)}(G)$. Let $x\in G$ and $\theta_x$ be the inner automorphism of $G$ induced by $x$. Then, 
$g^{-1}\theta_x(g)=[g,x]\in L(G)$ for all $g$ in $G$, and consequently $G'\leq L(G)$. Hence, 
\begin{eqnarray*}
|G/Z(G)|=|\mathrm{Inn}(G)|&=&|\mathrm{Aut}_{Z(G)}^{L(G)}(G)|\\
&=&|\mathrm{Hom}(G/Z(G),L(G))|\geq |G/Z(G)|p^{r(s-1)}
\end{eqnarray*}
where $r=\mathrm{rank}(G/Z(G))$ and $s=\mathrm{rank}(L(G))$. Thus $p^{r(s-1)}=1$. Since $r\geq 2$, we must have $s=1$ so that $L(G)$ is cyclic.
\end{proof}
\begin{theorem}
Let $G$ be a non-abelian $p$-group. Then $\mathrm{Aut}_l(G)=\mathrm{Inn}(G)$ if and only if $G'\leq L(G)$, $L(G)$ is cyclic and $Z(G)=L(G)G^{p^n}$, where $p^n=\mathrm{exp}(L(G))$.
\end{theorem}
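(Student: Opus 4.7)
The plan is to prove both directions via Proposition 2.4, which identifies $\mathrm{Aut}_l(G)$ with $\mathrm{Hom}(G/L(G),L(G))$, combined with the classical fact that for a finite abelian $p$-group $A=\prod_i C_{p^{a_i}}$ and cyclic target $C=C_{p^n}$, one has $|\mathrm{Hom}(A,C)|=\prod_i p^{\min(a_i,n)}=|A/A^{p^n}|$, since every such hom kills $p^n$-th powers and the converse holds by projecting onto each cyclic summand.

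For the forward direction, assume $\mathrm{Aut}_l(G)=\mathrm{Inn}(G)$. Each inner automorphism $\theta_x$ is then absolute central, so $[g,x]=g^{-1}\theta_x(g)\in L(G)$ for all $g,x\in G$, giving $G'\leq L(G)$. Hence $G$ is nilpotent of class $2$, and the class-$2$ identity $[y^k,x]=[y,x]^k$ yields $\exp(G/Z(G))=\exp(G')\leq\exp(L(G))=p^n$, so $G^{p^n}\leq Z(G)$ and $L(G)G^{p^n}\leq Z(G)$. Lemma 3.1 gives $L(G)$ cyclic. To promote this inclusion to equality, I would decompose $G/L(G)=\prod_i C_{p^{a_i}}$ and apply the Hom-count above to obtain
\[
|\mathrm{Aut}_l(G)|=|\mathrm{Hom}(G/L(G),L(G))|=\prod_i p^{\min(a_i,n)}=|G/L(G)G^{p^n}|.
\]
Since simultaneously $|\mathrm{Aut}_l(G)|=|\mathrm{Inn}(G)|=|G/Z(G)|$, and $L(G)G^{p^n}\leq Z(G)$, the indices match, forcing $Z(G)=L(G)G^{p^n}$.

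For the converse, assume the three conditions. Lemma 2.8 gives $\mathrm{Inn}(G)\leq\mathrm{Aut}_l(G)$ because $G/L(G)$ is abelian. The same Hom-computation (whose hypotheses are now precisely the three given conditions: cyclicity of $L(G)$, the exponent match guaranteeing $G^{p^n}\leq Z(G)$, and the identification $Z(G)=L(G)G^{p^n}$) yields $|\mathrm{Aut}_l(G)|=|G/L(G)G^{p^n}|=|G/Z(G)|=|\mathrm{Inn}(G)|$, so the inclusion is an equality. The main substantive step in both directions is the Hom-count $|\mathrm{Hom}(G/L(G),L(G))|=|G/L(G)G^{p^n}|$, which is where cyclicity of $L(G)$ and its exponent being exactly $p^n$ enter; everything else is extracting $G'\leq L(G)$ from the inner-automorphism side and invoking the earlier lemmas.
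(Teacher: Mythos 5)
Your proof is correct, but it routes both directions through a single counting identity that the paper does not isolate, so the two arguments are worth contrasting. Your key tool is the computation $\abs{\mathrm{Hom}(G/L(G),L(G))}=\abs{G/L(G)G^{p^n}}$, valid once $G/L(G)$ is abelian and $L(G)$ is cyclic of exponent $p^n$ (it is in effect Lemma 2.1 applied to $(G/L(G))/(G/L(G))^{p^n}\cong G/L(G)G^{p^n}$). Combined with Proposition 2.4 this gives $\abs{\mathrm{Aut}_l(G)}=\abs{G/L(G)G^{p^n}}$, and both implications become index comparisons: in one direction against $\abs{\mathrm{Inn}(G)}=\abs{G/Z(G)}$ together with the containment $L(G)G^{p^n}\leq Z(G)$, and in the other against the containment $\mathrm{Inn}(G)\leq\mathrm{Aut}_l(G)$ from Lemma 2.8. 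The paper instead proves sufficiency structurally: it invokes Theorem 3.3 to get $\mathrm{Inn}(G)=\mathrm{Aut}_{Z(G)}^{L(G)}(G)$ and then shows every absolute central automorphism fixes $Z(G)=L(G)G^{p^n}$ elementwise (because it fixes $p^n$-th powers), so that $\mathrm{Aut}_l(G)=\mathrm{Aut}_{Z(G)}^{L(G)}(G)$; and it proves necessity by sandwiching $\mathrm{Hom}(G/L(G)G^{p^n},L(G))$ between $\mathrm{Hom}(G/Z(G),L(G))$ and $\mathrm{Hom}(G/L(G),L(G))$, which are forced to have equal order. Your version bypasses Theorem 3.3 entirely and is shorter and more unified, making transparent exactly where cyclicity of $L(G)$ and the value of its exponent enter; the paper's version is longer but yields the additional structural conclusion $\mathrm{Aut}_l(G)=\mathrm{Aut}_{Z(G)}^{L(G)}(G)$ and reuses Theorem 3.3, which is of independent interest. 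The remaining ingredients you use (extracting $G'\leq L(G)$ from inner automorphisms being absolute central, the class-$2$ identity $[y^k,x]=[y,x]^k$ giving $G^{p^n}\leq Z(G)$, and Lemma 3.1 for cyclicity of $L(G)$) match the paper's.
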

\begin{proof}
Suppose first that $G'\leq L(G)$, $L(G)$ is cyclic and $Z(G)=L(G)G^{p^n}$. By Theorem $3.3$, 
$\mathrm{Inn}(G)=\mathrm{Aut}_{Z(G)}^{L(G)}(G)$. Now suppose $\alpha\in \mathrm{Aut}_l(G)$. Let $g\in G$ and $l\in L(G)$ be such that $\alpha(g)=gl$. Then $\alpha(g^{p^n})=g^{p^n}l^{p^n}$. Since $\mathrm{exp}(L(G))=p^n$, we have $l^{p^n}=1$ and so $\alpha(g^{p^n})=g^{p^n}$. Hence $\alpha$ acts trivially on $Z(G)$, that is, $\alpha\in \mathrm{Aut}_{Z(G)}^{L(G)}(G)$. Thus $\mathrm{Aut}_l(G)\leq\mathrm{Aut}_{Z(G)}^{L(G)}(G)$ so that $\mathrm{Aut}_l(G)=\mathrm{Aut}_{Z(G)}^{L(G)}(G)=\mathrm{Inn}(G)$, as required.

Conversely, suppose $\mathrm{Aut}_l(G)=\mathrm{Inn}(G)$. By Lemma $3.1$, $L(G)$ is cyclic and by Lemma $2.8$, $G'\leq L(G)$. Now we have $\mathrm{exp}(G')\leq\mathrm{exp}(L(G))=p^n$. Hence for all $a,b\in G$, $[a^{p^n},b]=1$. This means $a^{p^n}\in Z(G)$, for all $a\in G$. Therefore $G^{p^n}\leq Z(G)$ whence $L(G)G^{p^n}\leq Z(G)$. Since $L(G)\leq Z(G)$, it follows that
$$\mathrm{Hom}(G/Z(G),L(G))\hookrightarrow \mathrm{Hom}(G/L(G)G^{p^n},L(G))\hookrightarrow \mathrm{Hom}(G/L(G),L(G)).$$
As \[\mathrm{Inn}(G)\cong G/Z(G)\cong \mathrm{Hom}(G/Z(G),L(G))\] and \[\mathrm{Hom}(G/L(G),L(G))\cong \mathrm{Aut}_l(G)=\mathrm{Inn}(G),\] we observe that \[\mathrm{Hom}(G/Z(G),L(G))\cong \mathrm{Hom}(G/L(G)G^{p^n},L(G)),\] hence $|G/Z(G)|=|G/L(G)G^{p^n}|$ from which it follows that $Z(G)=L(G)G^{p^n}$.The proof is complete.
\end{proof}

\section*{Acknowledgments}
The authors are grateful to Dr. M. Farrokhi Derakhshandeh Ghouchan for his valuable suggestions and help in carrying out this work. 


\begin{thebibliography}{99}
\bibitem {Adney} J. E. Adney, T. Yen, Automorphisms of a $p$-group, {\sl Illinois J. Math}. {\bf 9} (1965), 137--143.
\bibitem {Cur} M. J. Curran, D. J. McCaughan, Central automorphims that are almost inner, {\sl Comm. Algebra}. {\bf 29} (5) (2001), 2081--2087.
\bibitem {Heg} P. V. Hegarty, The absolute centre of a group, {\sl J. Algebra}. {\bf 169} (1994), 929--935.
\bibitem{JAM} A. Jamali, H. Mousavi, On the central automorphism groups of finite $p$-groups, {\sl Algebra Colloq}. {\bf 9} (1) (2002), 7--14.
\bibitem {Safa} M. R. Moghaddam, H. Safa, Some properties of autocentral automorphisms of a group, {\sl Ric. Mat}. {\bf 59} (2) (2010), 257--264.
\bibitem {Nasr} M. M. Nasrabadi, Z. Kaboutari Farimani, Absolute central automorphisms that are inner, {\sl Indag. Math}. {\bf 26} (2015), 137--141.
\end{thebibliography}
\end{document}